\documentclass{birkjour}
\usepackage[all]{xy}
\usepackage{hyperref}  
\usepackage{cleveref}
   \usepackage{latexsym} 
  \usepackage[all]{xy}
  \usepackage{amsfonts} 
  \usepackage{amsthm} 
  \usepackage{amsmath} 
  \usepackage{amssymb}
  \usepackage{pifont}  
  \usepackage{enumerate}
  \usepackage{dcolumn}
  \usepackage{comment}

\newcolumntype{2}{D{.}{}{2.0}}
  \xyoption{2cell}

 \usepackage{tikz}
\usetikzlibrary{arrows}

  \def\sw#1{{\sb{(#1)}}}

   \def\<{{\langle}} 
  \def\>{{\rangle}} 
   
  \def\la{{\triangleright}} 
   
  \def\span{{\rm span}}

  \def\note#1{{}} 
 \def\can{{\rm \textsf{can}}}

  \def\note#1{}

  \def\cE{{\mathcal E}}

  \def\cO{{\mathcal O}}

  \def\beq{\begin{equation}} 
  \def\eeq{\end{equation}}

  \def\id{\mathrm{id}}

  \def\ot{{\otimes}}

  \newcounter{zlist} 
  \newenvironment{zlist}{\begin{list}{(\arabic{zlist})}{ 
  \usecounter{zlist}\leftmargin2.5em\labelwidth2em\labelsep0.5em 
  \topsep0.6ex 
  \parsep0.3ex plus0.2ex minus0.1ex}}{\end{list}}

  \newcounter{blist}

  \newcounter{rlist}

\def\stac#1{\raise-.2cm\hbox{$\stackrel{\displaystyle\otimes}{\scriptscriptstyle{#1}}$}}

\def\cten#1{\raise-.2cm\hbox{$\stackrel{\displaystyle\reallywidehat{\otimes}}
{\scriptscriptstyle{#1}}$}}

  \def\Label#1{\label{#1}\ifmmode\llap{[#1] }\else 
  \marginpar{\smash{\hbox{\tiny [#1]}}}\fi} 
  \def\Label{\label}

  \newtheorem{proposition}{Proposition}[section]
  \newtheorem{lemma}[proposition]{Lemma} 
  \newtheorem{corollary}[proposition]{Corollary}

\theoremstyle{definition}

  \theoremstyle{remark}

\renewcommand{\theequation}{\thesection.\arabic{equation}} 
  \newcounter{c} 
   
  \newcommand{\etyk}[1]{\vspace{-7.4mm}$$\begin{equation}\Label{#1} 
  \addtocounter{c}{1}} 
  \renewcommand{\]}{\ifnum \value{c}=1 $$\else \end{equation}\fi} 
   \numberwithin{equation}{section}

\def\ot{\otimes}

\def\AA{{\mathbb A}}

\def\CC{{\mathbb C}}

\def\NN{{\mathbb N}}

\def\ZZ{{\mathbb Z}}

\def\tt{{\mathfrak T}}

\newcommand{\Cc}{\mathcal{C}}

\def\O{{\bf O}}

\def\*C{{}^*\hspace*{-1pt}{\Cc}}

\def\text#1{{\rm {\rm #1}}}

 \def\1{\mathbf{1}}

\def\bpartial{\bar{\partial}}

\def\id{\mathrm{id}}

  \def\sw#1{{\sb{(#1)}}}

\def\1\mathbf{1}

\def\|#1{\overline{#1}}

\newcommand\reallywidehat[1]{%
\savestack{\tmpbox}{\stretchto{%
  \scaleto{%
    \scalerel*[\widthof{\ensuremath{#1}}]{\kern.1pt\mathchar"0362\kern.1pt}%
    {\rule{0ex}{\textheight}}
  }{\textheight}%
}{2.4ex}}%
\stackon[-6.9pt]{#1}{\tmpbox}%
}
\begin{document}

\title[Differential calculus on the non-standard quantum sphere]{Remarks on covariant differential calculus on the non-standard quantum Podle\'s sphere}

\author[Brzezi\'nski]{Tomasz Brzezi\'nski}

\address{
Department of Mathematics, Swansea University,
Fabian Way,
  Swansea SA1 8EN, U.K.\ \newline \noindent
Faculty of Mathematics, University of Bia{\l}ystok, K.\ Cio{\l}kowskiego  1M,
15-245 Bia\-{\l}ys\-tok, Poland}

\email{T.Brzezinski@swansea.ac.uk, T.Brzezinski@uwb.edu.pl}

\author[Ó Buachalla]{R\'eamonn Ó Buachalla}

\address{Mathematical Institute of Charles University, Sokolovsk\'a 83, Prague, Czech Republic}

\email{reamonnobuachalla@gmail.com}

\subjclass{58B32; 46L87}

\keywords{first-order covariant calculus; quantum homogeneous space; Podle\'s sphere}
\thanks{This research is supported through WEAVE-UNISONO by the National Science Centre, Poland grant no.\ 2023/05/Y/ST1/00046, and by GA\v{C}R grant no. CA24-11728K. R\'OB acknowledges support from COST Action 21109
CaLISTA, supported by COST (European Cooperation in Science and Technology) and HORIZON-MSCA-2022-SE-01-01 CaLIGOLA}
 
\begin{abstract}
An $SU_q(2)$-covariant first-order differential calculus on the non-standard quantum Podle\'s two-sphere is constructed. The module of one-forms is explicitly given in terms of modules of sections of charge $\pm 2$ line bundles associated to the quantum Hopf fibration. 
\end{abstract}    
\date\today
\maketitle

\section{Introduction}
The quantum Podle\'s 2-sphere \cite{Pod:sph} is one of the most fundamental and most studied examples of a quantum or non-commutative space. It is often taken as an illustration of both the richness and the non-trivial nature of noncommutative geometry arising from quantum groups. It is a quantum homogeneous space and a base for the noncommutative counterpart of the classical Hopf fibration. Going beyond the purely algebraic investigations, the quantum Podle\'s 2-sphere is an example of a graph $C^*$-algebra \cite{HonSzy:sph} and a highly nontrivial example of a spectral noncommutative geometry \`a la Connes \cite{DabSit:Dir}, \cite{DabLan:spe}. 

The algebra of coordinate functions on the quantum sphere,  $\cO_{q,s}(S^2) $, is a two-parameter deformation of the algebra of functions on the two-sphere. To maintain the connection with the $C^*$-algebra version of the quantum two-sphere, the parameters are typically taken from the intervals $q\in (0,1)$, $s\in [0,1]$. The limiting case $s=0$, $q\to 1$ corresponds to the commutative algebra of functions on the two-sphere. In general, the case $s=0$, referred to as the \emph{standard quantum sphere}, and denoted by $\cO(S_q^2)$, differs significantly from the case $s\neq 0$, known as the \emph{non-standard quantum sphere}. The standard quantum sphere $\cO(S_q^2)$ arises as the quotient of the quantum group $SU_q(2)$ by its quantum subgroup $U(1)$. It can be and is identified with the zero-degree subalgebra of the natural $\ZZ$-grading of the algebra of functions on $SU_q(2)$. Furthermore, this grading is strong (meaning that every element of $\cO(S_q^2)$ is a sum of products of elements of opposite degrees). As $\cO(S_q^2)$ is the fixed point or coinvariant subalgebra of the coaction of $\cO(U(1))$ on $\cO_{q}(SU(2)) $ induced by the Hopf-algebra surjection $\cO_{q}(SU(2)) \to \cO(U(1))$, many algebraic properties of $\cO_{q}(SU(2)) $ transfer to $\cO(S_q^2)$. From the perspective of this note, the fact that the left covariant first-order differential calculus on $\cO_{q}(SU(2)) $ introduced in \cite{Wor:dif}, induces a two-dimensional covariant differential calculus on $\cO(S_q^2)$, first analyzed in \cite{Pod:dif}, \cite{Pod:cla} is of key significance.  
By \cite{Pod:cla}, there is a unique covariant two-dimensional differential calculus on $\cO(S_q^2)$ with module of one-forms generated (as a left $\cO(S_q^2)$-module) by one-forms $d e_i$, where $e_i$, $i=0,\pm$ are generators of $\cO(S_q^2)$. As realized in \cite{Maj:Rie}, the module of one-forms splits into the direct sum of (finitely generated projective) modules of sections of line bundles of monopole charge $\pm 2$ associated in \cite{HajMaj:pro} with the quantum Hopf fibration constructed in \cite{BrzMaj:gau}. These direct summands have a natural interpretation as holomorphic and anti-holomorphic forms \cite{KhaLan:hol}, \cite{OBu:com}. This parallels exactly the classical geometry of the two-sphere.

If $s\neq 0$ the situation is entirely different. The coalgebra surjection $\cO_{q}(SU(2)) \to \cO(U(1))$ that generates the coaction with coinvariants $\cO_{q,s}(S^2) $ is no longer a Hopf-algebra map.  The induction process that produces a 2-dimensional calculus from the $3$-dimensional calculus on $\cO_{q}(SU(2)) $ can no longer be used. As observed  in  \cite{Pod:cla}, there are no $2$-dimensional covariant calculi on $\cO_{q,s}(S^2) $ generated as a left $\cO_{q,s}(S^2) $-module by the $de_i$. Even more remarkably, Heckenberger and Kolb have proven in \cite{HecKol:Pod} that all covariant differential calculi on $\cO_{q,s}(S^2) $ that are generated as a left module by the $de_i$ are free as left and right modules. This is counterintuitive, as it would correspond to the statement that the cotangent bundle over the sphere is trivial.

The aim of this note is to demonstrate that there are covariant first-order differential calculi over $\cO_{q,s}(S^2) $, which split into holomorphic and anti-holomorphic parts given by modules closely related to modules of sections of line bundles of charges $\pm 2$, and thus reflect closely both the classical and the standard quantum sphere cases. 

\section{Notation and conventions}\label{sec.prelim}
In this section we briefly recall algebras that feature in this note. We assume that the reader  is familiar with the theory of quantum groups. The main reference is \cite{KliSch:gro}.

The coordinate algebra of the quantum group $SL(2)$, $\cO_q(SL(2))$, is a complex matrix quantum group generated by $\alpha,\beta,\gamma,\delta$ arranged in a $2\times 2$ matrix
$\begin{pmatrix}
    \alpha&\beta\cr\gamma&\delta
\end{pmatrix}$
and relations
$$
\alpha\beta=q\beta\alpha,\quad  \alpha\gamma =q\gamma\alpha, \quad \alpha\delta
= \delta\alpha + (q-q^{-1})\beta\gamma,
$$
$$\beta\gamma = \gamma\beta,\quad \beta\delta = q\delta\beta, \quad
\gamma\delta = q\delta\gamma, \quad \alpha\delta - q\beta\gamma =1,
$$
where $q$ is a complex parameter.
The comultiplication $\Delta$ and counit $\varepsilon$ are of matrix type, that is,
$$
\Delta\begin{pmatrix}
    \alpha&\beta\cr\gamma&\delta
\end{pmatrix} = \begin{pmatrix}
    \alpha&\beta\cr\gamma&\delta
\end{pmatrix}\otimes \begin{pmatrix}
    \alpha&\beta\cr\gamma&\delta
\end{pmatrix}, \qquad \varepsilon\begin{pmatrix}
    \alpha&\beta\cr\gamma&\delta
\end{pmatrix}=\begin{pmatrix}
    1 &0\cr0&1
\end{pmatrix}.
$$
The antipode $S$ is the inverse of $\begin{pmatrix}
    \alpha&\beta\cr\gamma&\delta
\end{pmatrix}$, that is:
$$
S\alpha =\delta, \quad S\beta = -q^{-1}\beta, \quad S\gamma = -q\gamma, \quad S\delta = \alpha.
$$

If $q\in \mathbb{R}$, then $\cO_q(SL(2))$ admits a $*$-algebra structure with the involution defined by
$$
\delta = \alpha^*, \quad \beta = - q\gamma^*.
$$
$\cO_q(SL(2))$ with this $*$-algebra structure is the algebra of coordinate functions on the quantum group $SU(2)$, which is denoted by $\cO_{q}(SU(2)) $. If, furthermore, $q\in (0,1)$, then $\cO_{q}(SU(2)) $ can be completed universally to the $C^*$-algebra of continuous functions on the quantum group $SU(2)$.

The algebra $\cO_{q,s}(S^2) $ of the coordinate functions on the quantum Podle\'s sphere is defined as a left coideal subalgebra of $\cO_q(SL(2))$ generated by 
$$
\xi = s(q^{-1}\beta^2-\alpha^2 ) +(1-s^2)q^{-1}\alpha\beta, \quad \eta =
s(q\gamma^2-\delta^2)
+(s^2-1)\gamma\delta,
$$
$$ \zeta = s(\beta\delta- q\alpha\gamma)
+ (1-s^2)q\beta\gamma ,
$$
where $s\in [0,1]$. The left coideal subalgebra property means that $\Delta(\cO_{q,s}(S^2) )\subset \cO_q(SL(2))\otimes \cO_{q,s}(S^2) $. The derived algebra relations are:
\begin{equation}\label{sphere}
\begin{aligned}
     \xi\zeta = q^{2}\zeta\xi, \quad &\eta\zeta = q^{-2}\zeta\eta, \quad \xi\eta = (s^2-\zeta)(\zeta + 1),\\  &\eta\xi = (s^2-q^{-2}\zeta)(q^{-2}\zeta + 1).
\end{aligned}
   \end{equation}
Thus abstractly, $\cO_{q,s}(S^2) $ is an algebra generated by $\xi,\eta,\zeta$ and relations \cref{sphere}.

If $q\in \mathbb{R}$, the $*$-structure of $\cO_{q}(SU(2)) $ restricts to $\cO_{q,s}(S^2) $ thus making it into a $*$-algebra with  $\zeta^*=\zeta$ and $\eta = \xi^*$. If, further, $q\in (0,1)$, the $C^*$-completion yields the algebra of continuous functions on the quantum sphere originally introduced in \cite{Pod:sph}.

The quantised universal enveloping algebra $U_q(sl_2)$ is generated  by $K^{\pm 1}$, $E$, $F$ with relations:
$$
KE=q^2 EK, \quad KF=q^{-2}FK,\quad  [E,F] = \frac{K-K^{-1}}{q-q^{-1}}.
$$
This is a Hopf algebra with comultiplication $\Delta$, counit $\varepsilon$ and the antipode $S$:
$$
\begin{aligned}
    &\Delta(K^{\pm 1}) = K^{\pm 1}\otimes K^{\pm 1}, \qquad  \Delta (E) =E\otimes K + 1\otimes E, \\
    &\Delta (F) =F\otimes 1 + K^{-1}\otimes F,\\
    & \varepsilon(K^{\pm 1}) = 1, \qquad \varepsilon(E) = \varepsilon(F)=0,\\
    &S(K^\pm 1) = K^{\mp 1}, \qquad S(E) = -EK^{-1},\qquad S(F) = -KF.
\end{aligned}
$$

$U_q(sl_2)$ and $\cO_q(SL(2))$ are dually paired Hopf algebras. On generators the Hopf algebra pairing 
$\langle-,-\rangle: U_q(sl_2)\times \cO_q(SL(2))\to \CC$ is given by:
\begin{equation}\label{pairing}
    \langle K,\alpha\rangle = q^{-1}, \quad \langle K,\delta\rangle = q, \quad \langle E,\gamma\rangle = \langle F,\beta\rangle = 1;
\end{equation}
see \cite[p.\ 120, Theorem~21]{KliSch:gro}.
As a result of this, the left Hopf action of $U_q(sl_2)$ on $\cO_q(SL(2))$, which in general is given by the formula
\begin{equation}\label{act.gen}
   X\la x = \sum x\sw 1\langle X,x\sw 2\rangle,\qquad \mbox{for all $X\in U_q(sl_2)$,\, $x\in \cO_q(SL(2))$,}
\end{equation}
where $\sum x\sw 1\otimes x\sw 2 = \Delta(x)$ is the Sweedler notation for the coproduct,
comes out as
\begin{subequations}\label{action}
    \begin{equation}\label{act.e}
        E\la \alpha = \beta, \quad E\la \beta = 0, \quad E\la \gamma = \delta, \quad E\la\delta = 0,
        \end{equation}
        \begin{equation}\label{act.k}
            K\la \alpha = q^{-1}\alpha, \quad K\la \beta = q\beta, \quad K\la \gamma = q^{-1}\gamma , \quad K\la\delta = q\delta,
        \end{equation}
\begin{equation}\label{act.f}
        F\la \alpha = 0, \quad F\la \beta = \alpha, \quad F\la \gamma = 0, \quad F\la\delta = \gamma.
        \end{equation}
\end{subequations}

If $q\in \mathbb{R}$, the real form $U_q(su_2)$  of $U_q(sl_2)$ is obtained by making $U_q(sl_2)$ into a $*$-Hopf algebra by setting
\begin{equation}\label{U*}
    K^* = K, \qquad E^* = KF.
\end{equation}
In the case of the real forms, the pairing \eqref{pairing} is compatible with the $*$-structure in the sense that, 
\begin{equation}\label{*comp}
 \langle K,x^*\rangle = \overline{\langle K^{-1},x\rangle}, \quad   \langle E,x^*\rangle = -q\overline{\langle F,x\rangle}, \quad \langle F,x^*\rangle = -q^{-1}\overline{\langle E,x\rangle},
\end{equation}
for all $x\in \cO_{q}(SU(2)) $; see \cite[Section~4.4.1]{KliSch:gro}. 

From now on we assume that $q\in \mathbb{R}$ and we consider the above algebras as $*$-algebras with involutions just described.

\section{First-order covariant differential calculus arising from  line bundles}\label{sec.diff}
The aim of this section is to construct a first-order covariant differential calculus on $\cO_{q,s}(S^2) $ that arises from charge $\pm 2$ line bundles over $\cO_{q,s}(S^2) $. 

By a covariant first-order differential calculus on $\cO_{q,s}(S^2) $ we mean an $\cO_{q,s}(S^2) $-bimodule and a left $\cO_{q}(SU(2)) $-comodule $\Omega$ together with a map $d:\cO_{q,s}(S^2) \to \Omega$ such that, for all $x,y\in \cO_{q,s}(S^2) $,
$$
\begin{aligned}
    & \mathrm{(a)}\qquad d(xy)=d(x)y+xd(y), \\
    & \mathrm{(b)}\qquad \Omega = \{\sum_ix_id(y_i)\;|\; x_i,y_i\in \cO_{q,s}(S^2) \},\\
    &\mathrm{(c)}\qquad \varrho\circ d = (\id \otimes d)\circ \Delta,
\end{aligned}
$$
where $\varrho: \Omega \to \cO_{q}(SU(2)) \ot \Omega$ is the left coaction and $\Delta$ is the comultiplication of $\cO_{q}(SU(2)) $ (remember that $\Delta (\cO_{q,s}(S^2) )\subset \cO_q(SL(2))\otimes\cO_{q,s}(S^2) $). 

As proven in \cite[Section~6]{BrzMaj:fac} there is a quantum version of the classical Hopf fibration (i.e.\ the quantum Podle\'s sphere is a base for a quantum principal bundle with the fibre $U(1)$ and total space the quantum group $SU(2)$). In algebraic terms this means that $\cO_{q}(SU(2)) $ is a principal coalgebra extension of $\cO_{q,s}(S^2) $ by a coalgebra $C$ spanned by grouplike elements $c_{n}$, $n\in \ZZ$. The coalgebra $C$ is defined as a quotient of $\cO_{q}(SU(2)) $ by the coideal and right $\cO_{q}(SU(2)) $-module $J= \langle \zeta, \xi+s,\eta +s\rangle \cO_{q}(SU(2)) $. By \cite[Theorem~2.2]{MulSch:hom}, $C$ is spanned by grouplike elements; their exact form is given in \cite[Section~6]{BrzMaj:fac}. The canonical surjection $\pi: \cO_{q}(SU(2)) \to C$ induces the right coaction $(\id\otimes\pi)\circ \Delta : \cO_{q}(SU(2)) \to \cO_{q}(SU(2)) \otimes C$. The algebra $\cO_{q,s}(S^2) \subset \cO_{q}(SU(2)) $ is then recovered as the coinvariants of this coaction, that is,
$$
\cO_{q,s}(S^2)  = \{x\in \cO_{q}(SU(2)) \; |\; (\id\otimes\pi)\circ \Delta (x) = x\otimes \pi(1)\}.
$$
Furthermore, the canonical Galois map
$$
\begin{aligned}
    \can : \cO_{q}(SU(2)) \otimes_{\cO_{q,s}(S^2) }\cO_{q}(SU(2)) &\to \cO_{q}(SU(2)) \otimes C,\\ 
    x\otimes y &\mapsto\sum xy\sw 1\otimes \pi(y\sw 2),
\end{aligned}
$$
is bijective (the quantum freeness condition). Moreover, the multiplication map $\cO_{q,s}(S^2) \otimes\cO_{q}(SU(2)) \to \cO_{q}(SU(2)) $ has a left $\cO_{q,s}(S^2) $-module right $C$-comodule section $\sigma$ (splitting; the equivariant projectivity).

The existence of the splitting $\sigma$ is equivalent to the existence of the \textit{strong connection lifting} $\ell: C\to \cO_{q}(SU(2)) \ot \cO_{q}(SU(2)) $ (for the definition and relation of $\ell$ to $\sigma$ the reader is referred to \cite{BrzHaj:Gal}). This is given iteratively as follows. Let $\ell_n := \ell(c_n)$. Then $\ell_0 = 1\otimes 1$ and, for all $n\in \NN$,
\begin{subequations}\label{str.con}
    \begin{equation}\label{str.con+}
    \ell_{n+1}=\frac{(q\gamma+q^{-n}s\delta)
\ell_{n}(-\beta+q^{-n}s\alpha)
+(\alpha +
q^{-n-1}s\beta)\ell_{n}(\delta-q^{-n}s\gamma)}{1+q^{-2n}s^2},
\end{equation}
\begin{equation}\label{str.con-}
    \ell_{-n-1}=\frac{(\delta- q^{n+1} s\gamma)
\ell_{-n}(\alpha+ q^{n}
s\beta)
+(\alpha q^{n}s-
q^{-1}\beta)\ell_{-n}(q^{n}s\delta+\gamma)}{1+q^{2n}s^2}.
\end{equation}
\end{subequations}

Let us write $\ell_n = \sum_{i\in I} \ell'_{n,i}\otimes \ell''_{n,i}$. As argued in \cite[Section~6]{BrzMaj:fac}, for all $n$, $\sum_{i\in I} \ell'_{n,i} \ell''_{n,i} = 1$. Furthermore, following  \cite[Theorem~3.1]{BrzHaj:Che}, for all $n,i,j$, $\ell''_{n,i}\ell'_{n,j} \in \cO_{q,s}(S^2) $. Then, by a straightforward argument $\{\ell''_{n,i}\}_{i\in I}$ generate projective left $\cO_{q,s}(S^2) $-modules $\cE_n$, while $\{\ell'_{-n,i}\}_{i\in I}$ generate projective right $\cO_{q,s}(S^2) $-modules $\tilde\cE_{n}$. Each of these modules can be interpreted  as a line bundle (viewed as a left or right module, correspondingly) of  
the topological charge $n\in \ZZ$. 
Using relations \eqref{str.con} one finds that each of the modules $\cE_{2}$ and $\tilde{\cE}_{-2}$ is generated by three elements. Explicitly, $\cE_{2}$ is generated by:
\begin{subequations}\label{gen.2}
    \begin{equation}
    e^+_{1} = \beta^2 - s(1+q^{-2})\alpha\beta + q^{-1}s^2\alpha^2, 
    \end{equation}
    \begin{equation}
        e^+_2 = q\beta\delta - qs(1+(q+q^{-1})\beta\gamma) + s^2\alpha\gamma,
    \end{equation}
    \begin{equation}
        e^+_3 = -q\delta^2 + s(q+q^{-1})\gamma\delta -s^2\gamma^2.
\end{equation}
\end{subequations}
Similarly, $\tilde{\cE}_{-2}$ is generated by:
\begin{subequations}\label{gen.-2}
    \begin{equation}
    e^-_{1} = q^{2}\gamma^2 + s(q+q^{-1})\gamma\delta + q^{-1}s^2\delta^2, 
    \end{equation}
    \begin{equation}
        e^-_2 = -q^2\alpha\gamma - qs(1+(q+q^{-1})\beta\gamma) - q^{-1}s^2\beta\delta,
    \end{equation}
    \begin{equation}
        e^-_3 = -q\alpha^2 - s(1+q^{-2})\alpha\beta -q^{-2}s^2\beta^2.
\end{equation}
 \end{subequations}

 \begin{lemma}\label{lem.lift}
   For any $a,c\in \CC$, let
   \begin{equation}\label{es}
       E^s_{a,c} = aE +\frac{s(a-c)}{q-q^{-1}}(K-1) +s^2qcKF \in U_q(sl_2).
   \end{equation}
   Then 
   \begin{equation}\label{skew.e}
       \Delta(E^s_{a,c})= E^s_{a,c}\otimes K +1\otimes E^s_{a,c},
   \end{equation}
   and
   \begin{equation}\label{lifting}
       E^s_{a,c}\la \xi = (a-cs^2) e^+_1, \quad E^s_{a,c}\la \zeta = (a-cs^2) e^+_2, \quad E^s_{a,c}\la \eta = (a-cs^2) e^+_3 .
   \end{equation}
 \end{lemma}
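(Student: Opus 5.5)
The coproduct formula \eqref{skew.e} is checked term by term using linearity of $\Delta$. From the Hopf structure,
\[
\Delta(E)=E\ot K+1\ot E, \qquad \Delta(K-1)=(K-1)\ot K+1\ot(K-1),
\]
where the second identity follows from $K\ot K-1\ot 1=(K-1)\ot K+1\ot(K-1)$. For the last term, multiplicativity gives
\[
\Delta(KF)=(K\ot K)(F\ot 1+K^{-1}\ot F)=KF\ot K+1\ot KF .
\]
So each of the three summands $E$, $K-1$, $KF$ is $(K,1)$-skew primitive. Hence so is any linear combination, which proves \eqref{skew.e}.

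For \eqref{lifting}, the plan is to exploit \eqref{skew.e} together with the fact that \eqref{act.gen} is a module-algebra action. For a $(K,1)$-skew primitive $X$ this gives the twisted Leibniz rule
\[
X\la(xy)=(X\la x)(K\la y)+x(X\la y).
\]
On generators we need the actions of $E$, $K-1$ and $KF$, read off from \eqref{action}:
\begin{itemize}
\item $E$: $\alpha\mapsto\beta$, $\gamma\mapsto\delta$, and $\beta,\delta\mapsto 0$;
\item $K-1$: multiplication by $q^{-1}-1$ on $\alpha,\gamma$ and by $q-1$ on $\beta,\delta$;
\item $KF$: $\beta\mapsto q^{-1}\alpha$, $\delta\mapsto q^{-1}\gamma$, and $\alpha,\gamma\mapsto0$.
\end{itemize}
It is cleaner to compute the three pieces separately on each quadratic monomial. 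The map $K-1$ acts on degree-two monomials as a scalar: $q^{-2}-1$, $0$ or $q^2-1$, according to the $K$-weight. The maps $E$ and $KF$ act on the relevant monomials through the twisted Leibniz rule. For example, $E\la\alpha^2=\beta(q\alpha)+\alpha\beta=(1+q^{2})\alpha\beta$ once we use $\beta\alpha=q^{-1}\alpha\beta$, and similarly for the others.

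Then I would apply $E^s_{a,c}$ to
\[
\xi=s(q^{-1}\beta^2-\alpha^2)+(1-s^2)q^{-1}\alpha\beta,
\]
and likewise to $\zeta$ and $\eta$. Each result should be expressed in the PBW-type normal-ordered monomials, using the commutation relations of $\cO_q(SL(2))$. For $\zeta$ this also requires $\alpha\delta=1+q\beta\gamma$ in order to produce the constant term and the $\beta\gamma$ term appearing in $e^+_2$. Finally I would collect the coefficients of $a$ and $c$ separately and compare with $(a-cs^2)e^+_i$. For instance, in $E^s_{a,c}\la\xi$ the $\beta^2$ coefficient should come out as $a-cs^2$. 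It gets contributions from $aE\la(\alpha\beta)$, from the $K-1$ term acting on $s q^{-1}\beta^2$ (weight $q^2$), and from $s^2qcKF\la\beta^2$ contributing nothing to $\beta^2$. This comparison gives three polynomial identities in $s$ per generator.

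The main obstacle is purely bookkeeping: the $(q-q^{-1})^{-1}$ factor in front of $K-1$ must cancel against $q^{\pm2}-1$ to give the factors $(1+q^{-2})$, $(q+q^{-1})$ appearing in \eqref{gen.2}, and the $s^2$ and $s^3$ terms must combine correctly. I expect the $\zeta$ case to be the most delicate because of the mixed weight-zero monomials $\beta\delta$, $\alpha\gamma$, $\beta\gamma$ and the reordering $\alpha\delta\to 1+q\beta\gamma$.

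A possible shortcut is to note that $X\mapsto X\la\,\cdot$ is linear in $a$ and $c$. So it suffices to verify the two cases $(a,c)=(1,0)$ and $(a,c)=(0,1)$: the first must give $e^+_i$, and the second $-s^2e^+_i$. The first case is $E+\frac{s}{q-q^{-1}}(K-1)$ and the second is $-\frac{s}{q-q^{-1}}(K-1)+s^2qKF$. This halves the computation.
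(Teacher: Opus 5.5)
Your proposal is correct and follows essentially the paper's route: skew-primitivity of $E$, $K-1$ and $KF$ (which you check directly, where the paper cites Noumi--Mimachi), the twisted Leibniz rule $X\la(xy)=(X\la x)(K\la y)+x(X\la y)$, and coefficient comparison on quadratic monomials; the split $E^s_{a,c}=aE^s_{1,0}+cE^s_{0,1}$ is only a bookkeeping refinement.

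When you carry out the computation, fix two slips.
\begin{enumerate}
\item Since $K\la\alpha=q^{-1}\alpha$, one has $E\la\alpha^2=q^{-1}\beta\alpha+\alpha\beta=(1+q^{-2})\alpha\beta$, not $(1+q^{2})\alpha\beta$.
\item The monomials $\beta\delta$ and $\alpha\gamma$ are not of weight zero; their $K$-eigenvalues are $q^{2}$ and $q^{-2}$. So the $(K-1)$-term contributes $s^2(a-c)(q\beta\delta+\alpha\gamma)$ to $E^s_{a,c}\la\zeta$. This is exactly what produces the coefficients $q(a-cs^2)$ of $\beta\delta$ and $s^2(a-cs^2)$ of $\alpha\gamma$ in $(a-cs^2)e^+_2$.
\end{enumerate}
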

 \begin{proof}
     As explained in \cite{NouMim:Ask} the space of skew-primitive elements in $U_q(sl_2)$ of the type as in \eqref{skew.e} is three-dimensional, spanned by $E$, $K-1$ and $KF$. Since $E^s_{a,c}$ is in this space it is skew-primitive (this can also be checked directly). Equation \eqref{skew.e} facilitates calculations of the action of $E^s_{a,c}$ on the elements of $\cO_q(SL(2))$, as, for all $x,y\in \cO_q(SL(2))$,
     $$
     E^s_{a,c}\la (xy) = (E^s_{a,c}\la x)K\la y + x (E^s_{a,c}\la y).
     $$
     In view of equations \eqref{action} one easily finds:
     $$
     \begin{aligned}
         E^s_{a,c}\la \alpha &= a\beta - \frac{s(a-c)}{1+q}\alpha, \qquad &E^s_{a,c}\la \beta = \frac{s(a-c)}{1+q}q\beta + s^2 c \alpha\\
         E^s_{a,c}\la \gamma &= a\delta - \frac{s(a-c)}{1+q}\gamma, \qquad &E^s_{a,c}\la \delta = \frac{s(a-c)}{1+q}q\delta + s^2 c \gamma
     \end{aligned}
     $$
     Consequently,
     $$
     \begin{aligned}
         E^s_{a,c}\la \alpha^2 &= (1+q^{-2})a\alpha\beta - q^{-1}s(a-c)\alpha^2 , \quad E^s_{a,c}\la (\alpha\beta) = qa\beta^2 +s^2c\alpha^2\\
E^s_{a,c}\la \beta^2 &= s^2(q+q^{-1})c\alpha\beta + qs(a-c)\beta^2.
  \end{aligned}
     $$
   Combining these expressions in the way that defines $\xi$ one easily verifies the first of equalities \eqref{lifting}.  Next,
$$
\begin{aligned}
         E^s_{a,c}\la (\beta\gamma) &= a\beta\delta +s^2q^{-1}c\alpha\gamma , \quad E^s_{a,c}\la (\beta\delta) = s(a-c)q\beta\delta +s^2qc + s^2(1+q^2)c\beta\gamma \\
 E^s_{a,c}\la (\alpha\gamma) &= s(c-a)q^{-1}\alpha\gamma +a + (q+q^{-1})a\beta\gamma.
  \end{aligned}
     $$
Hence, $E^s_{a,c}\la\zeta = (a-cs^2)e^+_2$. Finally,
$$
     \begin{aligned}
         E^s_{a,c}\la \gamma^2 &= (1+q^{-2})a\gamma\delta - q^{-1}s(a-c)\gamma^2 , \quad E^s_{a,c}\la (\gamma\delta) = qa\delta^2 +s^2c\gamma^2\\
E^s_{a,c}\la \delta^2 &= s^2(q+q^{-1})c\gamma\delta + qs(a-c)\delta^2,
  \end{aligned}
     $$
     so that $E^s_{a,c}\la\eta = (a-cs^2)e^+_3$.
 \end{proof}

Similarly, one can prove the following observations.
\begin{lemma}\label{lem.lift-}
   For all $\tilde{a},\tilde{c}\in \CC$, let
   \begin{equation}\label{fs}
       F^s_{\tilde{a},\tilde{c}} = -q\tilde{a}F +\frac{s(\tilde{a}-\tilde{c})}{q-q^{-1}}(K^{-1}-1) -s^2\tilde{c}K^{-1}E \in U_q(sl_2).
   \end{equation}
   Then 
   \begin{equation}\label{skew.f}
       \Delta(F^s_{\tilde{a},\tilde{c}})= F^s_{\tilde{a},\tilde{c}}\otimes 1 +K^{-1}\otimes F^s_{\tilde{a},\tilde{c}},
   \end{equation}
   and
   \begin{equation}\label{lifting-}
       F^s_{\tilde{a},\tilde{c}}\la \eta = (\tilde{a}-\tilde{c}s^2) e^-_1,  \quad F^s_{\tilde{a},\tilde{c}}\la \zeta = (\tilde{a}-\tilde{c}s^2) e^-_2, \quad 
       F^s_{\tilde{a},\tilde{c}}\la \xi = (\tilde{a}-\tilde{c}s^2) e^-_3.
   \end{equation}
 \end{lemma}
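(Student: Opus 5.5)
We argue exactly as for Lemma~\ref{lem.lift}: first show that $F^s_{\tilde a,\tilde c}$ is twisted primitive of type \eqref{skew.f}, then use this to compute its action on quadratic monomials in $\alpha,\beta,\gamma,\delta$.

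\emph{Skew-primitivity.} We check that each of $F$, $K^{-1}-1$ and $K^{-1}E$ satisfies $\Delta(Y)=Y\otimes 1+K^{-1}\otimes Y$. For $F$ this is the defining coproduct. For $K^{-1}-1$ we have $K^{-1}\otimes K^{-1}-1\otimes 1=(K^{-1}-1)\otimes 1+K^{-1}\otimes(K^{-1}-1)$. For $K^{-1}E$ we have $\Delta(K^{-1}E)=(K^{-1}\otimes K^{-1})(E\otimes K+1\otimes E)=K^{-1}E\otimes 1+K^{-1}\otimes K^{-1}E$. Linearity then gives \eqref{skew.f}. Consequently, for all $x,y\in\cO_q(SL(2))$,
$$
F^s_{\tilde a,\tilde c}\la(xy)=(F^s_{\tilde a,\tilde c}\la x)\,y+(K^{-1}\la x)(F^s_{\tilde a,\tilde c}\la y).
$$
We derive this from \eqref{act.gen} and the Hopf pairing. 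The order of the tensor factors should be checked carefully against \eqref{act.gen}; if necessary we adjust it by the same convention that the proof of Lemma~\ref{lem.lift} uses.

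\emph{Action on generators.} From \eqref{action} and $K^{-1}\la\alpha=q\alpha$, $K^{-1}\la\beta=q^{-1}\beta$ (similarly for $\gamma$ and $\delta$), $K^{-1}E\la\alpha=q^{-1}\beta$, $K^{-1}E\la\gamma=q^{-1}\delta$, and $K^{-1}E\la\beta=K^{-1}E\la\delta=0$, we obtain closed formulas. For example,
$$
F^s_{\tilde a,\tilde c}\la\alpha=\frac{s(\tilde a-\tilde c)}{q-q^{-1}}(q-1)\alpha-s^2q^{-1}\tilde c\beta,\qquad F^s_{\tilde a,\tilde c}\la\beta=-q\tilde a\alpha-\frac{s(\tilde a-\tilde c)}{1+q}q^{-1}\beta\cdot q,
$$
and analogous formulas for $\gamma$ and $\delta$. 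These mirror the formulas for $E^s_{a,c}$ with the roles of $(\alpha,\gamma)$ and $(\beta,\delta)$ interchanged.

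\emph{Action on quadratic monomials.} Using the twisted Leibniz rule, we compute the action on the monomials $\gamma^2,\gamma\delta,\delta^2$ (which make up $\eta$), on $\alpha\gamma,\beta\delta,\beta\gamma$ (which make up $\zeta$), and on $\alpha^2,\alpha\beta,\beta^2$ (which make up $\xi$). Along the way we reduce to normal order using the relations of $\cO_q(SL(2))$, in particular $\alpha\delta=1+q\beta\gamma$ and $\delta\alpha=1+q^{-1}\beta\gamma$. This reduction is what produces the constant term $-qs(\tilde a-\tilde c s^2)$ in $e^-_2$.

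\emph{Main obstacle.} The final step is to combine these monomial actions with the coefficients defining $\eta,\zeta,\xi$ and to check that each result factors as $(\tilde a-\tilde c s^2)$ times $e^-_1,e^-_2,e^-_3$ respectively. This bookkeeping is the heart of the proof. The coefficients of $\tilde a$ and $\tilde c$ must separately reproduce $e^-_i$ and $-s^2e^-_i$, so we verify the $\tilde a$-part and the $\tilde c$-part independently. The $\zeta$ case is the most delicate, because of the constant and $\beta\gamma$ terms produced by the reordering.

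As a cross-check, for $q$ real we expect $F^s$ to be related to $(E^s)^*$ via \eqref{U*} and \eqref{*comp}, and $e^-_i$ to be related to $(e^+_{4-i})^*$. If so, the identities \eqref{lifting-} would follow from \eqref{lifting} by applying $*$, since $\xi^*=\eta$ and $\zeta^*=\zeta$. This would confirm the normalisations, but we would rely on the direct computation for the general complex parameters.
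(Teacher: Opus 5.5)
Your proposal is correct and follows the route the paper intends (the paper proves this lemma ``similarly'' to Lemma~\ref{lem.lift}): skew-primitivity of each summand, the twisted Leibniz rule $F^s_{\tilde a,\tilde c}\la(xy)=(F^s_{\tilde a,\tilde c}\la x)y+(K^{-1}\la x)(F^s_{\tilde a,\tilde c}\la y)$, and direct evaluation on the quadratic monomials, which does factor as $(\tilde a-\tilde c s^2)e^-_i$. One slip in your optional cross-check: the correct correspondence is $(e^+_i)^*=e^-_i$, not $(e^+_{4-i})^*$; with it, the identity $\langle F^s_{\bar a,\bar c},y^*\rangle=\overline{\langle E^s_{a,c},y\rangle}$ used in Proposition~\ref{prop.calc}(3) yields \eqref{lifting-} directly from \eqref{lifting} for real $q$.
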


 With these preliminary observations at hand we can state and prove the main result of the present note.
 
 \begin{proposition}\label{prop.calc}
     \begin{zlist}
         \item Let $\sigma: \cO_{q,s}(S^2)  \to \cO_q(SL(2))$ be the algebra mo\-no\-morphism given by the action of $K$, that is, $\sigma:x\mapsto K\la x$. Let  
     $\Omega^{(0,1)} = \cE_2\sigma(\cO_{q,s}(S^2) )$, viewed as an $\cO_{q,s}(S^2) $-bimodule by the left action $x\cdot \omega = x\omega$ and the right action
     $\omega \cdot x := \omega \sigma(x)$, for all $\omega \in \Omega^{(0,1)}$ and $x\in \cO_{q,s}(S^2) $. Then, for all $a,c\in \CC$ such that $a- s^2c=1$,  $\Omega^{(0,1)}$, together with  
     $$
     \bar{\partial}: \cO_{q,s}(S^2)  \to \Omega^{(0,1)}, \quad x\mapsto E^s_{a,c}\la x,
     $$ 
     is  a  covariant first-order differential calculus on $\cO_{q,s}(S^2) $.
     \item 
     Let $\tilde{\sigma}: \cO_{q,s}(S^2)  \to \cO_q(SL(2))$ be the algebra monomorphism given by the action of $K^{-1}$, that is, $\tilde{\sigma}:x\mapsto K^{-1}\la x$. Let  
     $\Omega^{(1,0)} = \tilde{\sigma}(\cO_{q,s}(S^2) )\tilde{\cE}_{-2}$, viewed as an $\cO_{q,s}(S^2) $-bimodule by the left action $x\cdot \omega = \tilde{\sigma}(x)\omega$ and the right action
     $\omega \cdot x := \omega x$, for all $\omega \in \Omega^{(1,0)}$ and $x\in \cO_{q,s}(S^2) $. Then, for all $\tilde a,\tilde c$ such that $\tilde a - s^2\tilde c =1$, $\Omega^{(1,0)}$ is a first-order covariant differential calculus on $\cO_{q,s}(S^2) $ with the derivation
     $$
     {\partial}: \cO_{q,s}(S^2)  \to \Omega^{(1,0)}, \quad x\mapsto F^s_{\tilde{a},\tilde{c}}\la x.
     $$ 
     \item 
         Furthermore, $
     {\Omega^{(1,0)}}^* = \Omega^{(0,1)},
     $
     and, for all $x\in \cO_{q,s}(S^2) $,
     $$
     \bar{\partial}(x^*) = \partial(x)^*.
     $$
     \end{zlist}
\end{proposition}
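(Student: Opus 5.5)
We prove part (1) in detail; part (2) follows in the same way, with Lemma~\ref{lem.lift-} in place of Lemma~\ref{lem.lift}. Part (3) is then obtained from the $*$-compatibility \eqref{*comp}.

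\textbf{Part (1), the bimodule and the Leibniz rule.} We first check that $\Omega^{(0,1)}$ is a bimodule. Since $K$ is grouplike, $\sigma(x)=K\la x$ is an algebra map, and it is injective because $K$ acts invertibly. The space $\cE_2$ is a left $\cO_{q,s}(S^2)$-module, so $\Omega^{(0,1)}=\cE_2\sigma(\cO_{q,s}(S^2))$ is closed under left multiplication by $\cO_{q,s}(S^2)$ and under right multiplication by $\sigma(\cO_{q,s}(S^2))$. Associativity of the two actions is inherited from $\cO_q(SL(2))$. For the Leibniz rule we use the skew-primitivity \eqref{skew.e}:
$$E^s_{a,c}\la(xy)=(E^s_{a,c}\la x)(K\la y)+x(E^s_{a,c}\la y)=\bar\partial(x)\cdot y+x\cdot\bar\partial(y).$$
This is exactly axiom (a) for the twisted right action.

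\textbf{Part (1), the range of $\bar\partial$ and surjectivity.} Next we show that $\bar\partial$ takes values in $\Omega^{(0,1)}$. By \eqref{lifting} together with $a-s^2c=1$, we have $\bar\partial\xi=e^+_1$, $\bar\partial\zeta=e^+_2$ and $\bar\partial\eta=e^+_3$, and these lie in $\cE_2$. The Leibniz rule then gives $\bar\partial(\text{monomial})\in\sum \cO_{q,s}(S^2)\,e^+_i\,\sigma(\cO_{q,s}(S^2))\subseteq\Omega^{(0,1)}$. For axiom (b), it suffices that every element $\omega\,\sigma(y)$, with $\omega$ in $\cE_2$, is a sum of terms $x\,\bar\partial(z)$. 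Writing $\omega=\sum x_ie^+_i=\sum x_i\bar\partial(g_i)$ with $g_i\in\{\xi,\zeta,\eta\}$, we get
$$x_i\bar\partial(g_i)\cdot y=x_i\bar\partial(g_iy)-x_ig_i\bar\partial(y).$$
This uses that $\cE_2$ is generated by the $e^+_i$, as stated before Lemma~\ref{lem.lift}.

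\textbf{Part (1), covariance.} The left coaction on $\Omega^{(0,1)}$ is taken to be the restriction of $\Delta$. We must check $\Delta(\Omega^{(0,1)})\subseteq\cO_q(SL(2))\otimes\Omega^{(0,1)}$. The left $U_q(sl_2)$-action \eqref{act.gen} acts on the right tensor leg, so $\Delta(X\la x)=(\id\otimes X\la)\Delta(x)$. Hence $\Delta\circ\bar\partial=(\id\otimes\bar\partial)\circ\Delta$ and $\Delta\circ\sigma=(\id\otimes\sigma)\circ\Delta$. Because $\cO_{q,s}(S^2)$ is a left coideal subalgebra, we obtain $\Delta(x\bar\partial(z))\in\cO_q(SL(2))\otimes\Omega^{(0,1)}$. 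Since such elements span $\Omega^{(0,1)}$ by (b), covariance follows, including axiom (c). On the first reading this is where I expect the main difficulty. Strictly, one wants the coaction to land in $\cO_q(SU(2))\otimes\Omega$, and one needs that the span of $x\bar\partial(z)$ is all of $\Omega^{(0,1)}$ rather than merely $\cE_2$. The argument above handles the second concern via the Leibniz trick. The first is automatic, since $\cO_q(SU(2))$ and $\cO_q(SL(2))$ coincide as algebras.

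\textbf{Part (3).} By \eqref{*comp} and \eqref{U*}, $(X\la x)^*=(S(X)^*)\la x^*$. Take $X=F^s_{\tilde a,\tilde c}$ with $\tilde a=\bar a$ and $\tilde c=\bar c$, so that $\tilde a-s^2\tilde c=1$. Using $S(F)=-KF$, $S(E)=-EK^{-1}$, $E^*=KF$ and $(KF)^*=F^*K=E$, a direct computation shows $S(F^s_{\tilde a,\tilde c})^*=E^s_{a,c}$; the real scalars $s$ and $q-q^{-1}$ make the $(K^{-1}-1)$ term match. This gives $\partial(x)^*=\bar\partial(x^*)$. The same identity applied with $K$ gives $\tilde\sigma(x)^*=\sigma(x^*)$, since $S(K^{-1})^*=K$. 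Hence
$$\bigl(\tilde\sigma(x)\,\partial(y)\bigr)^*=\bar\partial(y^*)\,\sigma(x^*),$$
and as these elements span the two modules, ${\Omega^{(1,0)}}^*=\Omega^{(0,1)}$. The main computational check is the identity $S(F^s)^*=E^s$, including the sign and $q$-power conventions in \eqref{*comp}. If that does not match exactly, the fallback is to compare both sides on the generators $\xi$, $\zeta$, $\eta$, using $(e^-_i)^*$ against $e^+_j$ from \eqref{gen.2} and \eqref{gen.-2}.
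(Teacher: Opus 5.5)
Your parts (1) and (2) follow the paper's proof: the twisted Leibniz rule from \eqref{skew.e}, the identity $x e^+_i\sigma(y)=x\bpartial(\zeta_i y)-x\zeta_i\bpartial(y)$ for spanning, and covariance from $\Delta\circ\bpartial=(\id\otimes\bpartial)\circ\Delta$ together with the coideal property. They are fine.

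Part (3), however, rests on two identities that are false with the paper's conventions, so your hedge there is justified.

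\textbf{First identity.} $(X\la x)^*=S(X)^*\la x^*$ does not follow from \eqref{*comp} and \eqref{U*}. With $E^*=KF$ one gets $S(E)^*=-K^{-1}E^*=-F$ and $S(F)^*=-F^*K=-E$. But \eqref{*comp} gives $(E\la x)^*=-qF\la x^*$ and $(F\la x)^*=-q^{-1}E\la x^*$. For instance, $(E\la\gamma)^*=\delta^*=\alpha$, while $S(E)^*\la\gamma^*=-F\la\gamma^*=q^{-1}\alpha$. In effect, \eqref{*comp} is the Hopf $*$-pairing identity for the $*$-structure $E^*=qKF$, not for \eqref{U*}. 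Only the $K$-part of your formula, which gives $\tilde\sigma(x)^*=\sigma(x^*)$, is correct.

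\textbf{Second identity.} Since $S(F^s_{\bar a,\bar c})=q\bar aKF+\frac{s(\bar a-\bar c)}{q-q^{-1}}(K-1)+s^2\bar cE$, under \eqref{U*} one finds
$$S(F^s_{\bar a,\bar c})^*=qaE+\frac{s(a-c)}{q-q^{-1}}(K-1)+s^2cKF\neq E^s_{a,c}.$$
The middle terms agree, but the coefficients of $E$ and $KF$ are off by $q^{\pm1}$. The two discrepancies cancel each other (both identities do hold for $E^*=qKF$). That is why your final formula is correct, but the derivation as written is not valid.

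\textbf{Repair via the paper's route.} Bypass $S$ and \eqref{U*} and apply \eqref{*comp} summand by summand; for the last summand use $\langle K^{-1}E,y^*\rangle=-q\overline{\langle KF,y\rangle}$. This gives $\langle F^s_{\bar a,\bar c},y^*\rangle=\overline{\langle E^s_{a,c},y\rangle}$ for all $y$. Hence
$$\partial(x)^*=\sum x_{(1)}^*\,\overline{\langle F^s_{\bar a,\bar c},x_{(2)}\rangle}=E^s_{a,c}\la x^*=\bpartial(x^*).$$

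\textbf{Your fallback also works.} It would in fact make a cleaner main argument. Given $\tilde\sigma(x)^*=\sigma(x^*)$, both $D(x)=\bpartial(x^*)$ and $D(x)=\partial(x)^*$ satisfy $D(xy)=y^*D(x)+D(y)\,\sigma(x^*)$. So it suffices to compare them on $\xi,\zeta,\eta$, where the check reduces to $\xi^*=\eta$, $\zeta^*=\zeta$ and ${e^+_i}^*=e^-_i$ (the observation the paper records). This version even gives $\bpartial(x^*)=\partial(x)^*$ for arbitrary admissible $(a,c)$ and $(\tilde a,\tilde c)$. The reason is that on $\cO_{q,s}(S^2)$ each derivation is determined by its values $e^\pm_i$ on the generators.

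The equality ${\Omega^{(1,0)}}^*=\Omega^{(0,1)}$ then follows as you indicate. You also need that the elements $\bpartial(y)\cdot x$ span $\Omega^{(0,1)}$, which follows from the Leibniz rule.
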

 \begin{proof}
 (1) By Lemma~\ref{lem.lift}, for all $x,y\in \cO_{q,s}(S^2) $,
 $$
 \bpartial(xy) = \bpartial(x)\sigma(y) + x\bpartial(y)= \bpartial(x)\cdot y + x\cdot \bpartial(y),
 $$
 hence, $\bpartial$ is an $\Omega^{(0,1)}$-valued derivation on $\cO_{q,s}(S^2) $. Finally, any element of $\Omega^{(0,1)}$ is a linear combination of the $x e^+_i \sigma(y)$, with $x,y\in \cO_{q,s}(S^2) $. By the Leibniz rule,
 $$
 x e^+_i \sigma(y) = x\bpartial(\zeta_iy) - x\zeta_i\bpartial(y), 
 $$
where $\zeta_i$  is the generator of $\cO_{q,s}(S^2) $ such that $e^+_i = \bpartial(\zeta_i)$, which exists by \eqref{lifting}. Therefore, $\Omega^{(0,1)} = \cO_{q,s}(S^2) \bpartial\left(\cO_{q,s}(S^2) \right)$, as required for a first-order differential calculus over $\cO_{q,s}(S^2) $.

Since $\bar\partial$ is given by the action \cref{act.gen} of $U_q(su_2)$ on $\cO_{q,s}(S^2) $, 
\begin{equation}\label{part.cov}
  (\id \otimes \bar\partial)\circ \Delta = \Delta \circ \bar\partial,  
\end{equation}
 hence $\bpartial\left(\cO_{q,s}(S^2) \right)$ is a left $\cO_{q}(SU(2)) $-coideal.
 Furthermore, since $\Omega^{(0,1)} = \cO_{q,s}(S^2) \bpartial\left(\cO_{q,s}(S^2) \right)$, we have that $\Omega^{(0,1)}$ is a left $\cO_{q}(SU(2)) $-subcomodule and hence also an $\cO_{q}(SU(2))$-comodule,
while \cref{part.cov} states that $\bpartial$ is a comodule map. Put together all this means that $\Omega^{(0,1)}$ is a covariant calculus.

Statement (2) is proven in a similar way using Lemma~\ref{lem.lift-}.

(3) This follows from the observation that ${e_i^+}^* =e_i^-$, and in view of  \eqref{*comp}, for all $x\in \cO_{q,s}(S^2) $ and $y\in \cO_{q}(SU(2)) $,
$$
\tilde{\sigma}(x) = \sigma(x^*)^*, \quad \langle F^s_{\bar a,\bar c}, y^*\rangle  = \overline{\langle E^s_{a,c}, y\rangle}.
$$
\end{proof}

Note that the conditions for $a,c$ and $\tilde a, \tilde c$ can be replaced by $a\neq cs^2$ and $\tilde{a}\neq \tilde{c}s^2$ and by the appropriately rescaled derivations. As rescaling of a derivation by a non-zero number is an automorphism  of a differential calculus, the relaxing of the conditions does not lead to any new calculi.

\begin{corollary}\label{cor.calc}
With the assumptions and notation as in Proposition~\ref{prop.calc}, the pair
$$
d=\partial + \bpartial, \qquad \Omega^1 = \span\{x dy\;|\;x,y\in \cO_{q,s}(S^2) \} \subseteq \Omega^{(1,0)} \oplus \Omega^{(0,1)},
$$
 is a  first-order $\cO_{q}(SU(2)) $-covariant  $*$-differential calculus on $\cO_{q,s}(S^2) $. 
\end{corollary}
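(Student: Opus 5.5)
We follow the standard construction of a first-order calculus from a sum of calculi. Write $\Omega^{(1,0)}\oplus\Omega^{(0,1)}$ as the direct sum of the $\cO_{q,s}(S^2)$-bimodules of Proposition~\ref{prop.calc}(1),(2), with componentwise bimodule structure and the diagonal left $\cO_{q}(SU(2))$-coaction. The components are subcomodules of $\cO_q(SL(2))$, so their coactions are restrictions of $\Delta$. Set $d(x)=(\partial x,\bpartial x)$.

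\textbf{Derivation property.} Since $\partial$ and $\bpartial$ are derivations into their respective bimodules (Proposition~\ref{prop.calc}), $d$ is a derivation into the direct sum. Hence $\Omega^1=\span\{x\,dy\}$ is a left submodule. It is also a right submodule, because $x\,d(y)z = x\,d(yz)-xy\,d(z)$. Thus $(\Omega^1,d)$ satisfies (a) and (b) by construction.

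\textbf{Covariance (c).} By \eqref{part.cov} and its analogue for $\partial$, both derivations intertwine $\Delta$, so $\varrho\circ d=(\id\ot d)\circ\Delta$. Since the coaction is an algebra-compatible left coaction on the bimodule, we get
$$
\varrho(x\,dy)=\sum x\sw1 y\sw1\ot x\sw2\,d(y\sw2),
$$
using $\Delta(\cO_{q,s}(S^2))\subset\cO_q(SL(2))\ot\cO_{q,s}(S^2)$. Hence $\varrho(\Omega^1)\subseteq\cO_{q}(SU(2))\ot\Omega^1$, and $\Omega^1$ is a subcomodule.

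\textbf{$*$-structure.} Define an antilinear involution on $\Omega^{(1,0)}\oplus\Omega^{(0,1)}$ by $(\omega,\nu)^*=(\nu^*,\omega^*)$. This uses Proposition~\ref{prop.calc}(3): the $*$ of $\cO_q(SL(2))$ exchanges $\Omega^{(1,0)}$ and $\Omega^{(0,1)}$.

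We must check $(x\cdot\omega\cdot y)^*=y^*\cdot\omega^*\cdot x^*$ for the twisted actions. For $\omega\in\Omega^{(1,0)}$,
$$
(\tilde\sigma(x)\,\omega\, y)^* = y^*\,\omega^*\,\tilde\sigma(x)^* = y^*\omega^*\sigma(x^*) = y^*\cdot\omega^*\cdot x^*
$$
in $\Omega^{(0,1)}$. Here we use $\tilde\sigma(x)=\sigma(x^*)^*$, from the proof of Proposition~\ref{prop.calc}(3). The same computation handles the other component.

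Then $d(x)^*=(\bpartial(x)^*,\partial(x)^*)$. The $(1,0)$ component of $d(x^*)$ is $\partial(x^*)$. From $\bpartial(x^*)=\partial(x)^*$, applied to $x^*$ and then starred, we get $\bpartial(x)^*=\partial(x^*)$. Hence $d(x^*)=d(x)^*$.

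Consequently $(x\,dy)^*=d(y)^*x^*=d(y^*)x^*=d(y^*x^*)-y^*dx^*\in\Omega^1$, so $\Omega^1$ is $*$-closed.

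\textbf{Main obstacle.} The delicate step is the compatibility of the involution with the twisted bimodule structures, i.e.\ the identity $\tilde\sigma(x)=\sigma(x^*)^*$. It follows from \eqref{*comp}: $K\la y^*=(K^{-1}\la y)^*$ because $K$ is grouplike and $\langle K,y^*\rangle=\overline{\langle K^{-1},y\rangle}$.
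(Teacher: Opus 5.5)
Your proposal is correct and takes the same route the paper intends: the paper gives no separate proof of the corollary and treats it as an immediate consequence of parts (1)--(3) of Proposition~\ref{prop.calc}, and your argument (componentwise derivation, right-module and comodule closure of $\Omega^1$, the swapped involution $(\omega,\nu)^*=(\nu^*,\omega^*)$) spells out exactly that consequence. The one detail you leave implicit is that the twisted actions are compatible with the coaction, which you need in the covariance step; it holds because $\Delta\circ\sigma=(\id\ot\sigma)\circ\Delta$, and likewise for $\tilde\sigma$, since the left action of $U_q(sl_2)$ commutes with $\Delta$ in the first leg.
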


\end{document}